\theoremstyle{plain}
\newtheorem{thm}{Theorem}[section]
\newtheorem*{thm*}{Theorem}
\newtheorem*{prop*}{Proposition}
\newtheorem{cor}{Corollary}[section]
\newtheorem*{cor*}{Corollary}
\newtheorem*{lem*}{Lemma}
\theoremstyle{definition}
\newtheorem{defn}{Definition}[section]
\newtheorem*{defn*}{Definition}
\newtheorem{exmps}{Examples}[section]
\newtheorem*{exmps*}{Examples}
\newtheorem{exmp}[exmps]{Example}
\newtheorem*{exmp*}{Example}
\newtheorem*{exerc*}{Exercise}
\newtheorem{rems}{Remarks}[section]
\newtheorem*{rems*}{Remarks}
\newtheorem{rem}[rems]{Remark}
\newtheorem*{rem*}{Remark}
\newcommand{\N}{{\mathbb N}}
\newcommand{\Z}{{\mathbb Z}}
\newcommand{\R}{{\mathbb R}}
\newcommand{\C}{{\mathbb C}}
\newcommand{\B}[1]{\mathbb{#1}}
\DeclareRobustCommand{\rchi}{{\mathpalette\irchi\relax}}
\newcommand{\irchi}[2]{\raisebox{\depth}{$#1\chi$}}
\DeclareMathOperator{\diam}{diam}
\begin{document}
\title[On expansive mappings]
{On expansive mappings}
\author[Marat V. Markin]{Marat V. Markin}
\address{
Department of Mathematics\newline
California State University, Fresno\newline
5245 N. Backer Avenue, M/S PB 108\newline
Fresno, CA 93740-8001
}
\email{mmarkin@csufresno.edu}
\author{Edward S. Sichel}
\email[Corresponding author]{edsichel@mail.fresnostate.edu}
\dedicatory{}
\subjclass[2010]{Primary 54E40, 54E45; Secondary 46T99}
\keywords{Metric space, expansion, compactness, total boundednessF}
\begin{abstract} 
When finding an original proof to a known result describing expansive mappings on compact metric spaces as surjective isometries, we reveal that relaxing the condition of compactness to total boundedness preserves the isometry property and nearly that of surjectivity. 

While a counterexample is found showing that the converse to the above descriptions do not hold, we are able to characterize boundedness in terms of specific expansions we call \textit{anticontractions}.
\end{abstract}
\maketitle
\epigraph{\textit{O God, I could be bounded in a nutshell, and count myself a king of infinite space - were it not that I have bad dreams.} }{William Shakespeare\\ (Hamlet, Act 2, Scene 2)}

\section[Introduction]{Introduction}

We take a close look at the nature of expansive mappings on certain metric spaces (compact, totally bounded, and bounded), provide a finer classification for such mappings, and use them to characterize boundedness.

When finding an original proof to a known result describing all expansive mappings on compact metric spaces as surjective isometries \cite[Problem $\textrm{X}.5.13^*$]{Dorogovtsev1987}, we reveal that relaxing the condition of compactness to total boundedness still preserves the isometry property and nearly that of surjectivity. 

We provide a counterexample of a not totally bounded metric space, on which the only expansion is the identity mapping, demonstrating that the converse to the above descriptions do not hold.

Various examples for different types of expansions are furnished, in particular the one of a nonsurjective expansion on a totally bounded ``dial set'' in the complex plane which allows to better understand the essence of the latter.

\section[Preliminaries]{Preliminaries}

Here, we outline certain preliminaries essential for the subsequent discourse (for more, see, e.g., \cite{Kaplansky,Markin2018EFA,Muscat2014,Patty,Sutherland}).

\begin{defn}[Sequential Compactness]\ \\
A set $A$ in a metric space $(X,d)$ is called \textit{sequentially compact}, or compact in the \textit{Bolzano-Weierstrass sense}, if every sequence $\left(x_n\right)_{n\in\N}$ of its elements contains a subsequence convergent to an element of $A$.

A metric space $(X,d)$ is said to be sequentially compact
if sequentially compact is the set $X$.
\end{defn}

\begin{rem}
In a metric space setting, the above definition of compactness is equivalent to compactness in the \textit{Heine-Borel sense} defined via open covers (see, e.g., \cite{Markin2018EFA,Patty}). 

\end{rem}

It is convenient for us to use a sequential definition for total boundedness as well (see e.g., \cite{Markin2018EFA,Muscat2014}).

\begin{defn}[Total Boundedness]\ \\
A set $A$ in a metric space $(X,d)$ is called \textit{totally bounded} if every sequence of its elements contains a fundamental (Cauchy) subsequence.

A metric space $(X,d)$ is said to be totally bounded
if totally bounded is the set $X$.
\end{defn}

\begin{defn}[Boundedness]\ \\
A set $A$ in a metric space $(X,d)$ is said to be \textit{bounded} if
\[
\diam(A):=\sup_{x,y\in X}d(x,y)<\infty,
\]
the number $\diam(A)$ being called the \textit{diameter} of $A$

A metric space $(X,d)$ is said to be bounded
if bounded is the set $X$.
\end{defn}

\begin{rem}\label{remcitb}
In a metric space, a (sequentially) compact set is totally bounded and a totally bounded set is bounded but not vice versa (see, e.g., \cite{Markin2018EFA}).
\end{rem}

\section{Expansive Mappings}

Now, we introduce and further classify the focal subject of our study, \textit{expansive mappings} (or \textit{expansions}). 

\begin{defn}[Expansive Mapping] \ \\
Let $(X,d)$ be a metric space.
A mapping $T:X\to X$ on $(X,d)$ such that
\[
\forall\, x,y \in X:\ d(Tx,Ty)\geq d(x,y)
\]
is called an \textit{expansive mapping} (or \textit{expansion}).
\end{defn}

It is important for our discourse to introduce a finer classification of expansions.

\begin{defn}[Types of Expansions]\ \\
Let $(X,d)$ be a metric space.
\begin{enumerate}
\item An expansion $T:X \to X$ such that
\[
\forall\, x,y \in X:\ d(Tx,Ty)=d(x,y)
\]
is called an \textit{isometry}, which is the weakest form of expansive mappings.
\item An expansion $T:X \to X$ such that
\[
\exists\, x,y\in X,\ x\neq y:\ d(Tx,Ty)>d(x,y)
\]
we call a \textit{proper expansion}.  
\item An expansion $T:X \to X$ such that
\[
\forall\, x,y \in X,\ x\neq y:\ d(Tx,Ty)>d(x,y)
\]
we call a \textit{strict expansion}.  
\item Finally, an expansion $T:X \to X$ such that
\[
\exists\, E >1\ \forall\, x,y \in X:\ d(Tx,Ty)\ge Ed(x,y)
\]
we call an \textit{anticontraction} with \textit{expansion constant} $E$.
\end{enumerate} 
\end{defn}

\begin{rem}
Clearly, any \textit{anticontraction} is necessarily a \textit{strict expansion}, which in turn is also a \textit{proper expansion}. However, as the following examples demonstrate, the converse statements are not true.
\end{rem}

\begin{exmps}\label{exmpsexp}\
\begin{enumerate}[label={\arabic*.}]
\item On $\C$ with the standard metric, the mapping 
$$ 
g(z):= e^iz,
$$
i.e., the counterclockwise rotation 
by one radian, is an \textit{isometry} which is not a proper expansion.
\item On the space $\ell_{\infty}$ of all real- or complex-termed \textit{bounded} sequences with its standard \textit{supremum metric}
\begin{equation*}
\ell_{\infty}\ni x:=(x_k)_{k\in\N}, y:=(y_k)_{k\in\N} \mapsto d_\infty(x,y):=\sup_{k\in \N} |x_k-y_k|,
\end{equation*} 
the right shift mapping 
$$ 
\ell_{\infty}\ni (x_1,x_2,x_3 \dots )\mapsto
T (x_1,x_2,x_3 \dots ) := (0,x_1,x_2,x_3 \dots ) \in \ell_{\infty}
$$
is also an \textit{isometry} which is not a proper expansion.
\item On $\ell_{\infty}$, the mapping 
$$ 
\ell_{\infty}\ni (x_1,x_2,x_3 \dots )\mapsto
T (x_1,x_2,x_3 \dots ) := (x_1,x_1^2,x_2,x_2^2,\dots ) \in \ell_{\infty}
$$
is a \textit{proper expansion} that is  not strict, since, for $x:=(1,0,0,\dots),y:=(1/2,0,0,\dots)\in \ell_{\infty}$,
\[
d_\infty(Tx,Ty)=3/4>1/2=d_\infty(x,y),
\]
but, for $x:=(1,0,0,\dots),y:=(0,0,0,\dots)\in \ell_{\infty}$,
\[
d_\infty(Tx,Ty)=1=d_\infty(x,y).
\]

\item 
 In the space $L_2(0,\infty)$, 
consider the set of the equivalence classes $\{f_n\}_{n\in\N}$ represented by the functions
$$
f_n(x):=\sqrt{n}\rchi_{[0,1/n]}(x),\ n\in\N,x\in (0,\infty),
$$
($\rchi_\cdot(\cdot)$ is the \textit{characteristic function} of a set), which is a subset of the \textit{unit sphere}
\[
S(0,1):=\left\{f\in L_2(0,\infty)\,\middle|\,d_2(f,0)=\|f\|_2=1 \right\}.
\]

For any $m,n\in\N$ with $n>m$, we have:
\begin{align*}
\qquad
d_2(f_n,f_m)&=
\| f_n-f_m \|_2 = \left[\int_{0}^\infty |f_n(x)-f_m(x)|^2dx \right]^{1/2} \\
& =  \left[\int_{0}^\infty \left|(\sqrt{n}-\sqrt{m})\rchi_{[0,1/n]}(x) -\sqrt{m} \rchi_{(1/n,1/m]}(x)\right|^2dx\right]^{1/2}\\
& =  \left[\int_{0}^{1/n} (\sqrt{n}-\sqrt{m})^2dx +\int_{1/n}^{1/m} {\sqrt{m}}^2dx\right]^{1/2} \\
& =\left[\frac{m-2\sqrt{m}\sqrt{n}+n}{n}+m\left(\frac{1}{m}-\frac{1}{n}\right)\right]^{1/2}= \left[2-2\sqrt{\frac{m}{n}}\right]^{1/2}.
\end{align*}
The map $Tf_n:=f_{kn}$, $n\in\N$, with an arbitrary fixed $k\in\N$ is an \textit{isometry} 
on $\{f_n\}_{n\in\N}$ since, for any $m,n\in\N$ with $n>m$,
\begin{align*}
\qquad
d_2(Tf_n,Tf_m)&=
\|Tf_n-Tf_m\|_2=\|f_{kn}-f_{km}\|_2 =\left[2-2\sqrt{\frac{km}{kn}}\right]^{1/2}\\
&= \left[2-2\sqrt{\frac{m}{n}}\right]^{1/2}= \|f_m-f_n\|_2=d_2(f_n,f_m).
\end{align*}

On the other hand, the map $Sf_n:=f_{n^2}$, $n\in\N$, is a \textit{strict expansion}
on $\{f_n\}_{n\in\N}$ since, for any $m,n\in\N$ with $n>m$,
\begin{align*}
\qquad \qquad
d_2(Sf_n,Sf_m)&=\|Sf_n-Sf_m\|_2 =\|f_{n^2}-f_{m^2}\|_2= \left[2-2\sqrt{\frac{m^2}{n^2}}\right]^{1/2}\\
&=\left[2-2\frac{m}{n}\right]^{1/2} 
>\left[2-2\sqrt{\frac{m}{n}}\right]^{1/2}= \|f_n-f_m\|_2
=d_2(f_n,f_m),
\end{align*}
which is not an anticontraction since
\[
\dfrac{d_2(Sf_{n^2},Sf_n)}{d_2(f_{n^2},f_n)}
=\dfrac{\left[2-\frac{2}{n}\right]^{1/2} }{\left[2-\frac{2}{\sqrt{n}}\right]^{1/2}}\to 1,\ n\to \infty.
\]
\item On $\R$ with the standard metric, the mapping 
$$ f(x)= 2x$$
is an \textit{anticontraction} with \textit{expansion constant} $E=2$. However, the same mapping, when considered on $\R$ equipped with the metric
\[
\R\ni x,y\mapsto \rho(x,y):=\frac{|x-y|}{|x-y|+1},
\]
turning $\R$ into a \textit{bounded} space
(see, e.g., \cite{Markin2018EFA}), is merely a strict expansion, which is not an anticontraction since
\[
\dfrac{\rho(f(x),f(0))}{\rho(x,0)}
=\dfrac{\rho(2x,0)}{\rho(x,0)}
=\dfrac{\frac{|2x|}{|2x|+1} }{\frac{|x|}{|x|+1}}\to 1,\ x\to \infty.
\]
\end{enumerate}
\end{exmps}

\section{Expansions on Compact Metric Spaces}

\begin{thm}[Expansions on Compact Metric Spaces {\cite[Problem $\textrm{X}.5.13^*$]{Dorogovtsev1987}}]\label{ECMS}\ \\
An expansive mapping $T$ on a compact metric space
$(X,d)$ is a surjection, i.e.,
\[
T(X)=X,
\]
and an isometry, i.e.,
\[
\forall\, x,y\in X:\ d(Tx,Ty)=d(x,y).
\] 
\end{thm}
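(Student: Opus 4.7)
The plan is to use compactness to find near-recurrences of iterates of $T$, which will force $d(Tx,Ty)\le d(x,y)$ and also prevent any point from being missed by $T$.

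First, I would fix arbitrary $x,y\in X$ and consider the sequence $(T^n x, T^n y)_{n\in\N}$ in the compact product space $X\times X$. Extracting a convergent (hence Cauchy) subsequence gives indices $n_k\uparrow\infty$ such that both $(T^{n_k}x)$ and $(T^{n_k}y)$ are Cauchy. Since $T$ is expansive, so is every iterate $T^j$, which means that for any $k<l$ with $m:=n_l-n_k$,
\[
d(x,T^m x)\le d(T^{n_k}x, T^{n_l}x) \quad\text{and}\quad d(y,T^m y)\le d(T^{n_k}y, T^{n_l}y).
\]
Thus, for every $\eta>0$ I can simultaneously arrange $d(x,T^m x)<\eta$ and $d(y,T^m y)<\eta$ for some $m\ge 1$. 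This \emph{simultaneous near-recurrence} is the key lemma; the only subtlety is passing to a joint subsequence for the pair $(x,y)$, which is precisely what compactness of $X\times X$ supplies.

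Next, I would prove the isometry property. By the triangle inequality,
\[
d(T^m x,T^m y)\le d(T^m x,x)+d(x,y)+d(y,T^m y)<d(x,y)+2\eta,
\]
while expansiveness of $T^{m-1}$ gives $d(Tx,Ty)\le d(T^m x,T^m y)$. Combining yields $d(Tx,Ty)<d(x,y)+2\eta$, and since $\eta>0$ is arbitrary, $d(Tx,Ty)\le d(x,y)$. The reverse inequality is the definition of expansion, so equality holds and $T$ is an isometry.

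Finally, for surjectivity, I would observe that an isometry is automatically continuous, so $T(X)$ is the continuous image of a compact set, hence compact and in particular closed in $X$. If some $y_0\in X$ were not in $T(X)$, then $\delta:=d\bigl(y_0,T(X)\bigr)>0$. But applying the near-recurrence lemma with $x=y=y_0$ and $\eta<\delta$ produces $m\ge 1$ with $d(y_0,T^m y_0)<\delta$; since $T^m y_0\in T(X)$, this contradicts the definition of $\delta$. Hence $T(X)=X$. The main obstacle in the whole argument is the joint recurrence step — one must resist the temptation to handle $x$ and $y$ separately, since independent subsequences would not furnish a common $m$; working in $X\times X$ resolves this cleanly.
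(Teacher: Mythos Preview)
Your proof is correct and rests on the same core mechanism as the paper's: extract a Cauchy subsequence of iterates and use expansiveness of $T^{n_k}$ to pull the Cauchy estimate back to a near-recurrence $d(x,T^m x)<\eta$. The differences are organizational rather than substantive. You obtain \emph{joint} near-recurrence for $x$ and $y$ in one stroke by passing to the compact product $X\times X$; the paper achieves the same end by a two-stage diagonal extraction (first a subsequence along which $T^{n(k)}x\to x$, then a further refinement along which $T^{n(k(j))}y\to y$). Your closing remark that one ``must resist the temptation to handle $x$ and $y$ separately'' slightly overstates the difficulty: nested extraction, as the paper does, is a perfectly standard alternative to the product-space trick and also yields a common exponent. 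The paper also orders the steps differently---it first proves density of $T(X)$ from single-point recurrence, then isometry, then closes with density~$+$~closedness~$=$~surjectivity---whereas you go straight to isometry and then derive surjectivity by a distance-to-$T(X)$ contradiction; but these are rearrangements of the same ingredients. One small bonus of the paper's ordering is that it makes transparent exactly which steps use only total boundedness (density and isometry) and which genuinely need compactness (closedness of $T(X)$), a distinction the paper exploits in its follow-up theorem.
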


\begin{proof}
For an arbitrary point $x\in X$, and an increasing sequence $\left(n(k)\right)_{k\in\N}$ of natural numbers, consider the sequence 
$$
\left(x_{n(k)}:=T^{n(k)}x\right)_{k\in\N}
$$
in $(X,d)$.

Since the space $(X,d)$ is \textit{compact}, there exists a convergent subsequence $\left(x_{n(k(j))}\right)_{j\in\N}$, which is necessarily \textit{fundamental}. 

\begin{rem}\label{remtbnc1}
Subsequently, we use only the \textit{fundamentality}, and not the \textit{convergence} of the subsequence, and hence, only the total boundedness and not the compactness of the underlying space (Remark \ref{remcitb}).
\end{rem}

By the fundamentality of $\left(x_{n(k(j))}\right)_{j\in\N}$, without loss of generality, we can regard the indices $n(k(j))$, $j\in\N$, chosen sparsely enough so that
$$
d(x_{n(k(j))},x_{2n(k(j))}) \le \frac{1}{j},\ j\in\N.
$$

Since $T$ is an expansion, 
\begin{equation*}
d(x,x_{n(k(j))}) \leq d(T^{n(k(j))}x, T^{n(k(j))}x_{n(k(j))})
= d(x_{n(k(j))},x_{2n(k(j))}) \leq \frac{1}{j},\ j\in\N. 
\end{equation*} 

We thus conclude that
\[
x_{n(k(j))}=T^{n(k(j))}x\to x,\ j\to\infty,
\]
which implies that the range $T(X)$ is \textit{dense} in $(X,d)$,
i.e.,
\[
\overline{T(X)}=X.
\]

Now, let $x, y \in X$ be arbitrary. Then, for the sequence $\left(x_n:=T^nx\right)_{n\in\N}$,
we can, by the above argument, select a subsequence $\left(x_{n(k)}\right)_{k\in\N}$ such that
\[
x_{n(k)} \to x,\ k\to\infty,
\]
and then, in turn, for the sequence $\left(y_{n(k)}:=T^{n(k)}y\right)_{k\in\N}$, we choose a subsequence $\left(y_{n(k(j))}\right)_{j\in\N}$ for which
$$ 
y_{n(k(j))} \to y,\ j\to\infty.
$$ 

Since $\left(x_{n(k(j))}\right)_{j\in\N}$ is a subsequence of $\left(x_{n(k)}\right)_{k\in\N}$, we also have:
$$ 
\lim_{j\to\infty} x_{n(k(j))} = \lim _{k\to \infty} x_{n(k)} = x.
$$

Then, in view of the expansiveness of $T$, for any $j\in\N$,
$$
d(x,y) \leq d(Tx,Ty) \leq d(T^{n(k(j))}x,T^{n(k(j))}y)
=d(x_{n(k(j))},y_{n(k(j))}).
$$

Whence, passing to the limit as $j\to\infty$, by joint continuity of metric, we arrive at
$$
d(x,y) \leq d(Tx,Ty) \le d(x,y),
$$
which implies that
\[
\forall\, x,y\in X:\ d(Tx,Ty) =d(x,y),
\] 
i.e., $T$ is an \textit{isometry}.

\begin{rem}\label{remtbnc2}
Thus far, only the total boundedness and not the compactness of the underlying space has been utilized (Remark \ref{remcitb}).
\end{rem}

Being an isometry, the mapping $T$ is \textit{continuous}, whence, since $X$ is \textit{compact}, we infer that the image $T(X)$ is compact as well, and therefore \textit{closed} in $(X,d)$ (see, e.g., \cite{Markin2018EFA}). 

In view of the \textit{denseness} and the \textit{closedness} of $T(X)$, we conclude
that 
$$ 
T(X)=\overline{T(X)}=X, 
$$
i.e., $T$ is also a \textit{surjection}, as desired, which completes the proof.

\begin{rem}\label{remcntb}
For the \textit{surjectivity} of $T$, the requirement of the \textit{compactness} of the underlying space is essential, as we rely on the fact the continuous image of a compact set is compact. Example \ref{exmpds} demonstrates that this requirement cannot be relaxed even to total boundedness.
\end{rem} 
\end{proof}

\section{Expansions on Totally Bounded Metric Spaces}

We proceed now to demonstrate that relaxing the condition of the compactness of the underlying space to total boundedness yields a slightly weaker result, in which expansions emerge as ``presurjective'' isometries. 

\begin{thm}[Expansions on Totally Bounded  Metric Spaces]\label{ETBMS}\ \\ 
An expansive mapping $T$ on a totally bounded metric space
$(X,d)$ has a dense range, i.e.,
\[
\overline{T(X)}=X
\]
(``presurjection''), and is an isometry, i.e.,
\[
\forall\, x,y\in X:\ d(Tx,Ty)=d(x,y).
\] 
\end{thm}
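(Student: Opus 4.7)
The plan is to reuse, essentially verbatim, the first two parts of the proof of Theorem~\ref{ECMS}. Remarks~\ref{remtbnc1} and~\ref{remtbnc2} already signal that those arguments consumed only the \emph{fundamentality} of a subsequence of iterates $T^n x$, which is precisely what total boundedness furnishes; compactness was never actually needed for the density or the isometry conclusions.

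For density, I would fix $x \in X$, form the orbit $\left(T^n x\right)_{n \in \N}$, extract a Cauchy subsequence $\left(T^{n(k)} x\right)_{k \in \N}$ via total boundedness, and thin the indices so that
\[
d\bigl(T^{n(k(j))}x,\, T^{2n(k(j))}x\bigr) \le \frac{1}{j}, \quad j \in \N.
\]
Iterating the expansive inequality $n(k(j))$ times with $(a,b) := \bigl(x,\, T^{n(k(j))}x\bigr)$ collapses this to
\[
d\bigl(x,\, T^{n(k(j))} x\bigr) \le d\bigl(T^{n(k(j))} x,\, T^{2 n(k(j))} x\bigr) \le \frac{1}{j},
\]
so $T^{n(k(j))} x \to x$, placing $x$ in $\overline{T(X)}$; since $x$ was arbitrary, $\overline{T(X)} = X$.

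For the isometry conclusion, given arbitrary $x, y \in X$, I would apply the density argument twice in succession: first thin to indices $\bigl(n(k)\bigr)_{k\in\N}$ along which $T^{n(k)} x \to x$, and then further to $\bigl(n(k(j))\bigr)_{j\in\N}$ along which $T^{n(k(j))} y \to y$. Expansiveness then gives
\[
d(x,y) \le d(Tx,Ty) \le d\bigl(T^{n(k(j))} x,\, T^{n(k(j))} y\bigr),
\]
and joint continuity of the metric as $j \to \infty$ forces $d(Tx,Ty) = d(x,y)$.

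I do not anticipate a substantive obstacle, since the argument is exactly the one already executed for the compact case up through the isometry step. The only reason one cannot push further to surjectivity --- and this is precisely what makes the present theorem strictly weaker than Theorem~\ref{ECMS} --- is that without compactness of $X$, the isometric (hence continuous) image $T(X)$ need not be closed in $(X,d)$, so density cannot in general be promoted to equality.
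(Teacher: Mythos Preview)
Your proposal is correct and follows exactly the approach of the paper: the proof of Theorem~\ref{ETBMS} in the paper consists of nothing more than a pointer back to the density and isometry portions of the proof of Theorem~\ref{ECMS}, together with the observation (recorded in Remarks~\ref{remtbnc1} and~\ref{remtbnc2}) that those portions used only fundamentality of subsequences, hence only total boundedness. Your write-up is in fact more explicit than the paper's own one-sentence proof, since you spell out the orbit-thinning and double-subsequence arguments rather than merely citing them.
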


\begin{proof}
As is shown in the corresponding part of the proof of Theorem \ref{ECMS} (see Remarks \ref{remtbnc1} and \ref{remtbnc2}), the image $T(X)$ is \textit{dense} in $(X,d)$, i.e.,
\[
\overline{T(X)}=X,
\]
and $T$ is an \textit{isometry}.
\end{proof}

As is mentioned in Remark \ref{remcntb}, the compactness
of the underlying space is essential for the \textit{surjectivity} of expansions, the following  example demonstrating that, when compactness is relaxed to total boundedness, surjectivity is not guaranteed.

\begin{exmp}[Dial Set]\label{exmpds}\ \\
Let 
$$ 
D:= \left\{e^{in}\right\}_{n\in \Z_+}\subset
\left\{z\in \C\,\middle|\,|z|=1 \right\}
$$
($\Z_+$ is the set of \textit{nonnegative integers})
be a \textit{dial set} in the complex plane $\C$ with the usual distance, which is bounded in $\C$, and hence, totally bounded (see, e.g., \cite{Markin2018EFA}), and
\[
D\ni e^{in} \mapsto Te^{in}:=e^{i(n+1)}\in D,\ n\in \Z_+,
\]
be the counterclockwise rotation by one radian,
which is, clearly, an isometry (see Examples \ref{exmpsexp}) but not a surjection on $D$ since, as is easily seen,
$$ 
D \ni 1 = e^{0i}\notin T(D). 
$$
\end{exmp}

\begin{rems}\label{remsds}\
\begin{itemize}
\item This, in particular, implies that, by Theorem \ref{ECMS}, the dial set $D$ is \textit{not compact}, and hence, \textit{not closed}, in $\C$ (see, e.g., \cite{Markin2018EFA}).
\item Thus, on a totally bounded, in particular compact, metric space, any expansion is not proper but is an isometry which may fall a little short of being surjective.
\end{itemize}
\end{rems}

By Theorem \ref{ETBMS}, the range $T(D)$ is \textit{dense} in the dial set $D$, which is not closed, relative to the usual distance. This allows us to ``turn the tables'' on the dial set and derive the following rather interesting immediate corollary.

\begin{cor}
Let 
$$ 
D:= \left\{e^{in}\right\}_{n\in \Z_+}.
$$

Then, 
\begin{enumerate}[label={(\arabic*)}]
\item for an arbitrary $n\in \Z_+$, there exists an increasing sequence $\left(n(k)\right)_{k\in\N}$ of natural numbers such that
\[
e^{in(k)}\to e^{in},\ k\to \infty;
\]
\item there exists a $\theta\in \R\setminus \Z_+$ for which there is an increasing sequence $\left(n(k)\right)_{k\in\N}$ of natural numbers such that
\[
e^{in(k)}\to e^{i\theta},\ k\to \infty.
\]
\end{enumerate} 
\end{cor}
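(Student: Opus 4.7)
The plan is to leverage Theorem \ref{ETBMS} applied not only to $T$ but to each of its iterates. Since $T\colon D\to D$ is an isometry on the totally bounded space $D$, every iterate $T^p$, $p\in\N$, is again an isometric self-map of $D$, and Theorem \ref{ETBMS} applied to $T^p$ gives that
\[
T^p(D)=\{e^{im}\}_{m\ge p}
\]
is dense in $D$. That single observation supplies everything the corollary needs.

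For part (1), fix $n\in\Z_+$ and build $(n(k))_{k\in\N}$ inductively. Using density of $T(D)$ in $D$, pick $n(1)\in\N$ with $|e^{in(1)}-e^{in}|<1$. Having selected $n(1)<n(2)<\cdots<n(k-1)$ with $|e^{in(j)}-e^{in}|<1/j$ for each $j\le k-1$, set $p:=n(k-1)+1$ and invoke the density of $T^p(D)$ in $D$ to choose $n(k)\ge p$, hence $n(k)>n(k-1)$, with $|e^{in(k)}-e^{in}|<1/k$. The resulting sequence is strictly increasing and satisfies $e^{in(k)}\to e^{in}$.

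For part (2), I apply part (1) with $n=0$ to obtain an increasing sequence $(n(k))_{k\in\N}$ of natural numbers with
\[
e^{in(k)}\ \to\ 1\ =\ e^{i\cdot 2\pi}.
\]
Setting $\theta:=2\pi$, the irrationality of $2\pi$ gives $\theta\in\R\setminus\Z_+$, which finishes the proof. The only step that is not entirely automatic is the initial observation that each $T^p$ is again an expansion on $D$, allowing Theorem \ref{ETBMS} to be reapplied so as to force the approximating indices to grow; once that is in place, no genuine obstacle remains.
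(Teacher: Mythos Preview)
Your argument is correct. For part~(1) you refine the paper's approach: the paper simply invokes the density of $T(D)$ in $D$ from Theorem~\ref{ETBMS}, whereas you apply Theorem~\ref{ETBMS} to each iterate $T^p$ so as to force the approximating indices to be strictly increasing. This extra care is warranted, since density of $T(D)$ alone does not immediately hand you an \emph{increasing} sequence of exponents.

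For part~(2) your route is genuinely different. The paper argues that $D$ is not closed (Remarks~\ref{remsds}), hence has a limit point $e^{i\theta}\notin D$ on the unit circle, which forces $\theta\in\R\setminus\Z_+$. You instead notice that the literal statement only demands $\theta\notin\Z_+$, not $e^{i\theta}\notin D$, and exploit this by taking $\theta=2\pi$: part~(1) with $n=0$ already supplies $e^{in(k)}\to 1=e^{i\cdot 2\pi}$, and $2\pi$ is irrational. Your shortcut is perfectly valid and more economical for what is actually claimed; the paper's argument, on the other hand, establishes the stronger (and arguably intended) fact that $D$ admits a limit point lying \emph{outside} $D$.
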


\begin{samepage}
\begin{proof}\
\begin{enumerate}
\item Part (1) immediately follows from the fact that, by Theorem \ref{ETBMS}, the range $T(D)=\left\{e^{in}\right\}_{n\in \N}$ is \textit{dense} in $D$.
\item Part (2) follows from the fact that the set $D$, being \textit{not closed} (see Remarks \ref{remsds}), has at least one limit point not belonging to $D$, which, by continuity of metric, is located on the \textit{unit circle} $\left\{z\in \C\,\middle|\,|z|=1 \right\}$, i.e., is of the form $e^{i\theta}$ with some $\theta\in \R\setminus \Z_+$. 
\end{enumerate}
\end{proof}
\end{samepage}

\begin{rem}
If posed as a problem, the prior statement, although simply stated, might be quite challenging to be proved exclusively via the techniques of classical analysis. 
\end{rem}

\section{Are the Converse Statements True?}

Now, there are two natural questions to ask. 

\begin{itemize}
\item If every expansive map $T$ on a metric space $(X,d)$ is a surjective isometry, is the space compact? 
\item If every expansive map $T$ on a metric space $(X,d)$ is a presurjective isometry (see Theorem \ref{ETBMS}), is the space totally bounded? 
\end{itemize}

In other words, do the converse statements to Theorems \ref{ECMS} and \ref{ETBMS} hold? 

The following example answers both questions in the negative.

\begin{exmp}
In the space $\ell_\infty$, consider the bounded set $\{x_n\}_{n\in\B{N}}$ defined by
\[
x_n:=\left(0,\dots,0,\underbrace{1+\frac{1}{n}}_{\text{$n$th term}},0,\dots \right),\ n\in\N,
\]
and let $T$ be an arbitrary expansion on $\{x_n\}_{n\in\B{N}}$. First, we note that, for any expansion, if 
$$\exists\, m,n\in\N,\ m\neq n:\ Tx_m=Tx_{n},$$ 
then
$$ 0 =d(Tx_m,Tx_{n}) < d(x_m,x_{n}) $$
contradicting the expansiveness of $T$. Thus, the mapping $T$ is \textit{injective}.

Observe that 
$$ 
\forall\, m,n=2,3,\ldots:\ d(x_m,x_n)<2=d(x_1,x_n). 
$$ 

Assume 
\begin{equation}\label{asmpt}
Tx_1 \neq x_1.
\end{equation}

Then 
\[
Tx_1 = x_k
\]
with some $k\in \B{N}$, $k\ge 2$. Let $n\in\N$, $n\ge 2$, be arbitrary. 

There are two possibilities: either 
\[
Tx_n\neq x_1
\]
or
\[
Tx_n=x_1.
\]

In the first case, we have:
$$ 
d(Tx_1,Tx_n)= d(x_k,Tx_n) < 2 = d(x_1,x_n). 
$$
\textit{contradicting} the expansiveness of $T$. 

In the second case, for any $m\in \N$, $m\neq n$, by the \textit{injectivity} of $T$,
\[
Tx_{m} \neq x_1,
\]
and hence,
$$ 
d(Tx_1,Tx_{m})= d(x_k,Tx_{m}) < 2 = d(x_1,x_{m}),
$$
which again \textit{contradicts} the expansiveness of $T$.

The obtained contradictions making assumption \eqref{asmpt} false, we conclude that
$$
Tx_1 =x_1.
$$ 

Therefore, by the injectivity of $T$, we can restrict the expansion $T$ to the subset $\{x_n\}_{n\ge 2}$. Applying the same argument, one can show that 
$$
Tx_2=x_2.
$$

Continuing inductively, we see that 
$$
\forall\, n\in\B{N}:\ Tx_n=x_n,
$$
i.e. $T$ is the \textit{identity map}, which is both a \textit{surjection} and an \textit{isometry}, even though the set $\{x_n\}_{n\in\B{N}}$ is not totally bounded, let alone compact (see Remark \ref{remcitb}), as 
$$
\forall\, m,n\in\N,\ m\neq n:\ d_\infty(x_m,x_n)>1.
$$ 
\end{exmp}

\begin{rem}
Thus, a metric space with the property that every expansion on it is a \textit{presurjective isometry} need not be totally bounded. Such spaces, which, by Theorems \ref{ECMS} and \ref{ETBMS}, encompass compact and totally bounded, can be called \textit{nonexpansive}.
\end{rem}

\section{A Characterization of Boundedness}

Although bounded sets support strict expansions (see Examples \ref{exmpsexp} 4, 5). Any attempt to produce an anticontraction on a bounded set would be futile, the following characterization explaining why.

\begin{thm}[Anticontraction Characterization of Boundedness]\ \\
A metric space $(X,d)$ is bounded \textit{iff} no subset of $X$ supports an anticontraction. 
\end{thm}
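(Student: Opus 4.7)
The proof splits into two directions.

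\textbf{Forward direction} ($\Rightarrow$): Assume $\diam(X)=M<\infty$ and suppose, for contradiction, that a subset $A\subseteq X$ containing two distinct points admits an anticontraction $T\colon A\to A$ with constant $E>1$. Pick $x,y\in A$ with $d(x,y)>0$ and iterate the defining inequality: by straightforward induction, $d(T^n x,T^n y)\ge E^n d(x,y)$ for every $n\in\N$. But $T^n x,T^n y\in A\subseteq X$, so the left side is at most $M$, while the right side tends to $\infty$ since $E>1$, a contradiction.

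\textbf{Reverse direction} ($\Leftarrow$): Assume $\diam(X)=\infty$. The plan is to construct a countable subset $A=\{a_n\}_{n\ge 0}\subseteq X$ on which the ``shift'' $Ta_n:=a_{n+1}$ is an anticontraction with $E=2$. Fix $a_0\in X$ arbitrarily. Unboundedness supplies: for every $R>0$ there exist $u,v\in X$ with $d(u,v)\ge 2R$, and the triangle inequality then forces $\max\{d(a_0,u),d(a_0,v)\}\ge R$, so points at arbitrarily large distance from $a_0$ exist. I choose $a_n\in X$ inductively so that $r_n:=d(a_0,a_n)$ satisfies $r_1\ge 1$ and $r_n\ge 5r_{n-1}$ for $n\ge 2$; in particular $(r_n)$ is strictly increasing and $r_n\ge r_1\ge 1$ for all $n\ge 1$.

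The verification then reduces, via two applications of the triangle inequality, to the following pair of estimates for $n>m\ge 0$: the upper bound $d(a_n,a_m)\le r_n+r_m\le 2r_n$, and the lower bound $d(a_{n+1},a_{m+1})\ge r_{n+1}-r_{m+1}\ge 5r_n-r_n=4r_n$, where I use $r_{m+1}\le r_n$ (valid since $m+1\le n$, or in the boundary case $m=0$ since $r_1\le r_n$). Combining, $d(Ta_n,Ta_m)\ge 4r_n\ge 2\,d(a_n,a_m)$, as required. The main subtlety in this direction is selecting a growth rate fast enough that the triangle-inequality upper bound on $d(a_n,a_m)$ and lower bound on $d(a_{n+1},a_{m+1})$ are both controlled by the \emph{same} quantity $r_n$, yielding a \emph{uniform} expansion constant; the factor $5$ is tuned to produce $E=2$, and any sufficiently fast geometric growth would do with a correspondingly adjusted $E$.
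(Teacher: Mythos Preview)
Your proof is correct and follows the same overall strategy as the paper: the forward direction iterates the anticontraction inequality to contradict boundedness, and the reverse direction exhibits the shift map on a rapidly spreading sequence as an anticontraction with $E=2$. The only difference is cosmetic---the paper chooses each new point so that its distance to every previously chosen point exceeds twice the diameter of the finite set already constructed (making the verification immediate), whereas you anchor everything at a base point $a_0$, force geometric growth of $r_n=d(a_0,a_n)$, and recover the needed pairwise estimates via the triangle inequality; your version has the minor advantage of explicitly justifying why such points can be found in an unbounded space.
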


\begin{proof}
The case of a \textit{singleton} being trivial, suppose that $X$ consists of at least two distinct elements.

\medskip
\textit{``Only if"} part. We proceed \textit{by contradiction}, assuming that $X$ is bounded and there exists a subset $A\subseteq X$ supporting an anticontraction $T: A \to A$ with expansion constant $E$. Then 
$$
\forall\,x,y\in A,x\neq y\ \forall\,n\in\N:\ T^nx,T^ny \in A,
$$ 
which implies
$$ 
\text{diam}(A) \geq d(T^nx,T^ny) \geq E^n d(x,y) \to \infty, \ n \to \infty.  
$$

Hence, $A$ is unbounded, and since $A\subseteq X$, this \textit{contradicts} the boundedness of $X$, the obtained contradiction proving the \textit{``only if''} part.

\textit{``If"} part. Here, we proceed \textit{by contrapositive} assuming $X$ to be \textit{unbounded} and showing that there exists a subset of $X$ which supports an anti-contraction. 

Since $X$ is unbounded, we can select two distinct points $x_1,x_2 \in X$, and subsequently pick $x_3$ so that
$$ \min_{1 \leq i \leq 2} d(x_3,x_i) >2 \max_{1 \leq i,j \leq 2} d(x_i,x_j)  $$ 
Continuing inductively in this fashion, we construct a countably infinite subset $S:=\{x_n\}_{n\in\B{N}}$ of $X$ such that
$$ 
\min_{1 \leq i \leq n} d(x_{n+1},x_i) >2 \max_{1 \leq i,j \leq n} d(x_i,x_j). 
$$

Let
If we then define $T: \{x_n\}_{n\in\B{N}} \to \{x_n\}_{n\in\B{N}} $ by:
$$ 
S\ni x_n\mapsto Tx_n:= x_{n+1}\in S,\ n\in \N. 
$$

Then, for any $m,n\in \N$ with $n>m$,
\begin{align*}
d(Tx_n,Tx_m)&=d(x_{n+1},x_{m+1})\ge \min_{1 \leq i \leq n} d(x_{n+1},x_i)\\
&>2\max_{1 \leq i,j \leq n} d(x_i,x_j) \ge 2 d(x_n,x_m), 
\end{align*} 
which implies that $T$ is an anti-contraction with expansion constant $E=2$ on $S\subseteq X$ completing the proof of the \textit{``if''} part and the entire statement.
\end{proof}

Reformulating equivalently, we arrive at

\begin{thm}[Anticontraction Characterization of Unboundedness]\ \\
A metric space $(X,d)$ is unbounded iff there exists a subset of $X$ which supports an anticontraction. 
\end{thm}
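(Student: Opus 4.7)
The plan is to recognize that this statement is nothing more than the contrapositive reformulation of the preceding Anticontraction Characterization of Boundedness, which has already been established. Since a biconditional $P \iff Q$ is logically equivalent to $\neg P \iff \neg Q$, I would simply apply negations to both sides of that theorem and read off the claim.

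Concretely, the first step is to negate the left-hand side: the negation of ``$(X,d)$ is bounded'' is, by definition of boundedness from the preliminaries, ``$(X,d)$ is unbounded''. The second step is to negate the right-hand side: the universal statement ``no subset of $X$ supports an anticontraction'' reads symbolically as $\forall\, A\subseteq X$, $A$ does not support an anticontraction, whose negation is $\exists\, A\subseteq X$ such that $A$ supports an anticontraction. Combining, the negated biconditional reads verbatim as the present theorem.

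There is no substantive obstacle here: the mathematical content, namely the diameter-blowup argument $\operatorname{diam}(A)\ge E^n d(x,y)\to\infty$ ruling out anticontractions on bounded spaces, and the inductive construction of a sparse sequence with geometrically growing pairwise distances on which the shift acts as an anticontraction with $E=2$, has been entirely carried out in the proof of the preceding theorem. The only drafting decision is whether to invoke that theorem by name in a one-line proof or to quickly rehearse the logical equivalence inline; I would opt for the former for brevity, presenting the proof as a single sentence noting that the statement is the equivalent contrapositive of the previous characterization.
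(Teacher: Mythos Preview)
Your proposal is correct and matches the paper's approach exactly: the paper introduces this theorem with the phrase ``Reformulating equivalently, we arrive at'' and gives no separate proof, treating it as the immediate contrapositive of the Anticontraction Characterization of Boundedness. Your one-sentence invocation of that equivalence is precisely what the paper does.
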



\section{Acknowledgments}

The authors would like to express their appreciation to Drs. Michael Bishop, Przemyslaw Kajetanowicz, and other members of the Functional Analysis and Mathematical Physics Interdepartmental Research Group (FAMP) of California State University, Fresno for insightful questions and stimulating discussions.



\begin{thebibliography}{99}
\bibitem{Dorogovtsev1987}
{A.Ya. Dorogovtsev},	
\textit{Mathematical Analysis. A Collection of Problems},	
{Vishcha Shkola},	
{Kiev},		
{1987}  	
{(Russian)}.
\bibitem{Kaplansky}
{I. Kaplansky},	
\textit{Set Theory and Metric Spaces},	
{Allyn and Bacon, Inc.},	
{Boston},		
{1972}.		
\bibitem{Markin2018EFA}
{M.V. Markin},	
\textit{Elementary Functional Analysis}, 
{De Gruyter Graduate},	
{Walter de Gruyter GmbH},	
{Berlin/Boston},		
{2018}.
\bibitem{Muscat2014}
{J. Muscat},	
\textit{Functional Analysis. An Introduction to Metric Spaces, Hilbert spaces, and Banach Algebras}, 
{Springer International Publishing},	
{Switzerland},		
{2014}.		
\bibitem{Patty}
{C.W. Patty},	
\textit{Foundations of Topology},	
{Waveland Press, Inc.},	
{Prospect Heights, Illinois},		
{1997}.		
\bibitem{Sutherland}
{W.A. Sutherland},	
\textit{Introduction to Metric and Topological Spaces},	
{2nd ed.},	
{Oxford University Press Inc.},	
{New York},		
{2009}.		
\end{thebibliography}
\end{document}